\newtheorem{theorem}{Theorem}[section]
\newtheorem{definition}[theorem]{Definition}
\newtheorem{lemma}[theorem]{Lemma}
\newtheorem{proposition}[theorem]{Proposition}
\newtheorem{remark}[theorem]{Remark}
\def\r{\mathbb{R}}
\title{The Dirichlet problem for the $\alpha$-translating soliton  equation on a strip}
\author{Rafael L\'opez\footnote{Partially
supported by MEC-FEDER
 grant no. MTM2017-89677-P}\\
 Departamento de Geometr\'{\i}a y Topolog\'{\i}a\\ Instituto de Matem\'aticas (IEMath-GR)\\
 Universidad de Granada\\
 18071 Granada, Spain\\
\texttt{rcamino@ugr.es}}
\date{}
\begin{document}
\maketitle

\begin{abstract}
We prove the existence of classical solutions to the Dirichlet problem for the $\alpha$-translating soliton equation defined in a strip of $\r^2$. We use the Perron method where a family of grim reapers are employed as barriers for solving the Dirichlet problem when the boundary data is formed by two copies of a convex function.  
\end{abstract}

\section{Introduction and statement of results}
Let $\Omega\subset\r^2$ be a smooth   domain and a given constant $\alpha>0$. We consider the   Dirichlet problem
\begin{eqnarray}
&&\mbox{div}\left(\dfrac{Du}{\sqrt{1+|Du|^2}}\right)=\left(\frac{1}{\sqrt{1+|Du|^2}}\right)^\alpha\quad \mbox{in $\Omega, u\in C^2(\overline{\Omega})$}\label{eq1}\\
&&u=\varphi\quad \mbox{on $\partial\Omega,$}\label{eq2}
\end{eqnarray}
where $D$ and div are the gradient and divergence operators and $\varphi$ is a continuous function in $\partial\Omega$. 
  Equation   (\ref{eq1}) is called the {\it $\alpha$-translating soliton equation} and the graph $\Sigma_u=\{(x,u(x)):x\in\Omega\}$ is an {\it $\alpha$-translating soliton} whose boundary is the graph   of $\varphi$.  In  the limit case  $\alpha=0$,   Equation (\ref{eq1}) is the known constant mean curvature equation. The motivation for the study of Equation (\ref{eq1}) comes from the case $\alpha=1$, where $1$-translating solitons, or simply, translating solitons,    appear  in the singularity theory of the mean curvature flow in $\r^3$ as the limit flow by a proper blow-up procedure near type II singular points: \cite{an,hs,il,wh}.  When $\alpha\not=1$, Equation (\ref{eq1}) extends to the case of the flow of surfaces by powers of the mean curvature:  \cite{sh1,sh2,sw}.

However, and besides of this interest in the mean curvature flow, 
 Equation (\ref{eq1}) already appeared in the classical article of   Serrin \cite{se} on the Dirichlet problem for quasilinear equations of divergence type. Possibly due to the extension of this article and its   focus on  the constant mean curvature equation, Equation (\ref{eq1}) seemed be forgotten. Indeed, in   \cite[p.477]{se}, Serrin considered Equation (\ref{eq1}) written as
\begin{equation}\label{ese}
(1+|Du|^2)^{\frac32}\mbox{div}\left(\dfrac{Du}{\sqrt{1+|Du|^2}}\right)=c(1+|Du|^2)^{\frac{3-\alpha}{2}},
\end{equation}
where $c>0$ is a constant and he studied the solvability of the Dirichlet problem for   mean domains. Recall that $\Omega$ is said to be mean convex if the mean curvature $H_{\partial\Omega}$ with respect to the inward normal is non-negative,  $H_{\partial\Omega}\geq 0$.  For arbitrary dimension, Serrin proved  the following result in \cite[p.478]{se}.

\begin{theorem}\label{t1} Let $\Omega$ be a bounded    domain in $\r^n$. Then there exists a unique solution of (\ref{eq1})-(\ref{eq2}) for any continuous function $\varphi$ if and only if:
\begin{enumerate}
\item $\Omega$ is mean convex when $\alpha\geq 1$.
\item $H_{\partial\Omega}>0$ when $0<\alpha<1$.
\end{enumerate} 
\end{theorem}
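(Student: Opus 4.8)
The plan is to follow Serrin's approach from \cite{se}: for a bounded domain, solvability of (\ref{eq1})--(\ref{eq2}) for every continuous $\varphi$ is equivalent to having a priori estimates up to the boundary for an associated one-parameter family of problems, and the geometry of $\partial\Omega$ enters only through a boundary gradient estimate. Writing $W=\sqrt{1+|Du|^2}$, Equation (\ref{eq1}) is the elliptic non-divergence equation
\begin{equation*}
\sum_{i,j}\bigl((1+|Du|^2)\delta_{ij}-D_iu\,D_ju\bigr)D_{ij}u=(1+|Du|^2)^{\frac{3-\alpha}{2}}
\end{equation*}
of (\ref{ese}), whose coefficient matrix has eigenvalues $1$ and $W^2$ and whose right-hand side in (\ref{eq1}), namely $W^{-\alpha}$, lies in $(0,1]$; this positivity and boundedness make the zeroth-order control elementary. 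Uniqueness is then immediate: the difference of two solutions satisfies a linear homogeneous elliptic equation with no zeroth-order term and zero boundary data, so the maximum principle forces it to vanish.

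For the sufficiency I would use the method of continuity on $\operatorname{div}(Du/W)=t\,W^{-\alpha}$, $u|_{\partial\Omega}=\varphi$, $t\in[0,1]$ (Perron's method would serve equally well, given the barriers below). At $t=0$ this is the minimal surface equation, solvable because $\Omega$ is mean convex (Jenkins--Serrin); openness in $t$ is the implicit function theorem, since the linearized operator has no zeroth-order term and so the linear Dirichlet problem is uniquely solvable; closedness requires a uniform $C^{1,\beta}(\overline{\Omega})$ bound along the family. The $C^0$ bound $\sup_\Omega|u|\le\sup_{\partial\Omega}|\varphi|+C(\operatorname{diam}\Omega,\alpha)$ comes from comparison with spherical-cap or cylindrical barriers, using $0<t\,W^{-\alpha}\le1$. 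Granted the boundary gradient estimate below, the interior gradient estimate for solutions of (\ref{eq1}) promotes it to a global bound $\sup_\Omega|Du|\le C$; then De Giorgi--Nash and Schauder theory give $C^{2,\beta}(\overline{\Omega})$ and close the argument. A merely continuous $\varphi$ is reached afterwards by smooth approximation together with the equicontinuity up to $\partial\Omega$ furnished by the same barriers.

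The crux, where I expect the real work, is the boundary gradient estimate, and here the dichotomy of the theorem surfaces. Extending $\varphi$ to $C^2(\overline{\Omega})$, at $x_0\in\partial\Omega$ I would look for an upper barrier $w=\varphi+\psi(d)$ (and symmetrically a lower one) on a collar $\{0<d<a\}$, $d=\operatorname{dist}(\cdot,\partial\Omega)$, with $\psi(0)=0$, $\psi'>0$, $\psi''\le0$, and $\psi(a)$ large enough, via the $C^0$ bound, to dominate $u$ on $\{d=a\}$; comparison then bounds the normal derivative of $u$ at $x_0$ by $\psi'(0)$. From $|Dd|\equiv1$ one computes, as $\psi'\to\infty$,
\begin{equation*}
\operatorname{div}\left(\frac{Dw}{W}\right)=\frac{\psi''}{W^3}+\frac{\psi'\,\Delta d}{W}+O\left(\frac1W\right),\qquad \Delta d\le-(n-1)H_{\partial\Omega}+C\,d ,
\end{equation*}
so $w$ is a supersolution once this quantity is $\le W^{-\alpha}$. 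If $H_{\partial\Omega}>0$ everywhere, the term $\psi'\,\Delta d/W\to-(n-1)H_{\partial\Omega}<0$ beats $W^{-\alpha}\to0$ and even the affine profile $\psi(d)=Qd$ works, for every $\alpha>0$, once $Q$ is large; this disposes of the case $0<\alpha<1$. If only $H_{\partial\Omega}\ge0$, then at points where it vanishes one must absorb $W^{-\alpha}$ by concavity alone, i.e. solve $-\psi''\sim W^{3-\alpha}$, and the resulting profile can be made to have $\psi'(0)$ arbitrarily large while still attaining $\psi(a)$ as large as required precisely when $\int_0^a\bigl((2-\alpha)ct+Q^{2-\alpha}\bigr)^{-1/(2-\alpha)}\,dt$ diverges as $Q\to\infty$, i.e. exactly when $\alpha\ge1$ (the range $\alpha\ge2$ being even easier). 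Pinning down these admissible profiles, and thereby the clean split into the two cases, is the main obstacle.

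For the necessity, assume solvability for every continuous $\varphi$ and suppose the geometric condition fails at some $x_0\in\partial\Omega$, i.e. $H_{\partial\Omega}(x_0)<0$ when $\alpha\ge1$, or $H_{\partial\Omega}(x_0)\le0$ when $0<\alpha<1$. Running the barrier computation at $x_0$ then goes the wrong way: the $\Delta d$ term has the unfavourable sign, and in the borderline range $\alpha<1$ the slow decay $W^{-\alpha}$ caps the height attainable on any collar of $x_0$ by a subsolution of (\ref{eq1}), uniformly in its boundary slope. I would then exhibit boundary data $\varphi$ equal to a large constant $N$ on a small boundary arc around $x_0$ and moderate away from it: a solution $u$ would, by comparison, have to descend from height $\sim N$ to bounded values inside that collar while its graph is there nearly minimal ($|Du|$ large forces $W^{-\alpha}\approx0$), which for $N$ beyond the cap is impossible; when $\alpha\ge1$ and $H_{\partial\Omega}(x_0)<0$ the unfavourable $\Delta d$ term precludes any boundary gradient bound at all, equally incompatible with universal solvability. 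This is Serrin's comparison/blow-up argument, and together with the sufficiency it yields Theorem \ref{t1}.
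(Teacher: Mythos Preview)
The paper does not prove Theorem \ref{t1}: it is quoted verbatim as a result of Serrin \cite[p.~478]{se} and used throughout as a black box (notably to guarantee existence and uniqueness of the local lifts $\bar u$ in the Perron process of Section \ref{sec4}). There is therefore nothing in the paper to compare your proposal against.

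That said, your outline is a faithful sketch of Serrin's own argument in \cite{se}: continuity method reduced to a priori estimates, with the geometry of $\partial\Omega$ entering only through the boundary gradient barrier $w=\varphi+\psi(d)$, and the split between $\alpha\ge 1$ and $0<\alpha<1$ arising exactly from whether concavity of $\psi$ alone can dominate the term $W^{-\alpha}$ at boundary points where $H_{\partial\Omega}=0$. Two places are thinner than the rest. First, you invoke an interior gradient estimate to globalize the boundary bound; for this particular equation that is not entirely standard, but the paper itself supplies a cleaner route in Proposition \ref{t31}(3), which shows $\sup_{\overline\Omega}|Du|=\sup_{\partial\Omega}|Du|$ directly by differentiating (\ref{eq4}). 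Second, your necessity argument is heuristic: the phrase ``caps the height attainable \ldots\ by a subsolution'' hides the actual comparison construction, and Serrin's proof of non-solvability at a boundary point with the wrong curvature sign is more delicate than what you have written. Neither of these is a fatal gap in the strategy, but both would need to be filled in for a self-contained proof.
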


This result has been  recently revisited in the literature: see  \cite{ber,jj,jl,ma}. In the context of   translating solitons ($\alpha=1$), the existence of solutions of (\ref{eq1})   has been studied in \cite{aw,sx,wa}, including also  Neumann boundary conditions. In this article we study the solvability of (\ref{eq1})-(\ref{eq2}) when $\Omega$ is a strip of $\r^2$. The interest to the case that $\Omega$ is a strip comes by the result in \cite{wa} where Wang  found convex translating solitons that are graphs on a strip, although our methods and results in the present article differ on them. 

A first example of a translating soliton that is a graph on a strip is the grim reaper  $u(x,y)=-\log(\cos(y))$ where $u$ is defined in the strip $\Omega=\{(x,y)\in\r^2:-\pi/2<y<\pi/2\}$. Let us observe that $u\rightarrow\infty$ as $|y|\rightarrow\pm\pi/2$.  If we narrow the strip  to other strip included in $\Omega$, then   the restriction of $u$ to the new domain  has constant boundary values. For the case $\alpha>0$, it is possible to have the generalization of the grim reapers when we consider solutions of (\ref{eq1}) that depend only on one variable and these solutions will be candidates   to  be supersolutions of (\ref{eq1}).

In this paper we solve (\ref{eq1})-(\ref{eq2}) when the boundary data is formed by  two copies of a convex function of $\r$ extending what occurs for the grim reapers. Firstly, we need to introduce the next notation. Without loss of generality,   suppose that the strip is $\Omega_m=\{(x,y)\in\r^2:-m<y<m\}$, $m>0$. Let  $f$ be a smooth convex function defined in $\r$ and we extend $f$ to $\partial\Omega_m$ by defining $\varphi_f=\partial\Omega_m\rightarrow\r$ by $\varphi_f(x,\pm m)=f(x)$. 

\begin{theorem}\label{t2} Let $\Omega_m\subset\r^2$  be a strip. For each convex function $f$ defined in $\r$, there exists a   solution of (\ref{eq1}) for boundary values $u=\varphi_f$ on $\partial\Omega_m$ in the next cases:
\begin{enumerate}
\item For any $\alpha>1$ and $m>0$.
\item If $0<\alpha\leq 1$, provided the width of $\Omega_m$ satisfies $m <d(\alpha)$, where $d(\alpha)>0$ depends only on $\alpha$.
\end{enumerate}
\end{theorem}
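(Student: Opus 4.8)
The strategy is the Perron method. The key ingredients are: (1) a good family of subsolutions and supersolutions playing the role of barriers, built from the one-variable "generalized grim reapers"; (2) boundary barriers at $\partial\Omega_m$; (3) an a priori interior gradient estimate to guarantee compactness of the Perron solution. Let me sketch each.

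First, I would analyze the ODE obtained from \eqref{eq1} by assuming $u=u(y)$ depends on one variable only. Writing $w(y)=u'(y)$, equation \eqref{eq1} becomes $w'/(1+w^2)^{3/2}=(1+w^2)^{-\alpha/2}$, i.e. $w' = (1+w^2)^{(3-\alpha)/2}$. This is a separable first-order ODE; integrating $\int (1+w^2)^{-(3-\alpha)/2}\,dw = y + c$ gives an odd, increasing solution $w=w(y)$ defined on a maximal symmetric interval $(-d(\alpha),d(\alpha))$. The crucial dichotomy is whether $d(\alpha)$ is finite or infinite: since $(1+w^2)^{-(3-\alpha)/2}$ behaves like $|w|^{-(3-\alpha)}$ for large $|w|$, the integral $\int^\infty$ converges iff $3-\alpha>1$, i.e. $\alpha<2$. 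Hmm — I need to be more careful about when the grim-reaper profile exists on the whole strip of width $m$. Actually for the barrier to work I want the one-variable solution (or a supersolution of the right type) to exist on all of $(-m,m)$ and blow up at, or at least dominate the data near, the boundary. For $\alpha>1$ this should always be possible (case 1, no restriction on $m$); for $0<\alpha\le 1$ one only gets a profile on $(-d(\alpha),d(\alpha))$, forcing the width restriction $m<d(\alpha)$. I would make this precise by a careful study of the improper integral defining the inverse of $w$.

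Second, with the grim-reaper-type function $g=g(y)$ in hand (translated vertically as needed), I would construct, for given convex $f$, a supersolution of the form $\overline{u}(x,y) = f(x) + $ (something controlling the $y$-direction), or more robustly, use that the sum/sup of the convex function $f(x)$ (which, being convex, satisfies $\operatorname{div}(Df/\sqrt{1+|Df|^2})\ge 0 \ge \text{RHS}$... wait, I need to check signs) with the grim reaper gives a supersolution; and symmetrically build a subsolution $\underline{u}$. The point of having $f$ convex and the boundary data being two identical copies $\varphi_f(x,\pm m)=f(x)$ is exactly that the comparison function $f(x)+g(y)$ matches the symmetry and has the right concavity to beat the mean-curvature operator. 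These give the ordered pair $\underline{u}\le\overline{u}$ agreeing with $\varphi_f$ on $\partial\Omega_m$ that the Perron process needs, and simultaneously serve as the barriers certifying that the Perron solution attains the boundary values continuously.

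Third, I would run the standard Perron argument: the Perron function $u = \sup\{v : v \text{ subsolution}, v\le\varphi_f \text{ on }\partial\Omega_m\}$ is, by the usual comparison-principle and interior-regularity machinery for quasilinear elliptic equations (as in Gilbarg–Trudinger, or as developed by Serrin), a $C^2$ solution of \eqref{eq1} in the interior; the barriers from Step 2 force $u\to\varphi_f$ at the boundary. Since $\Omega_m$ is unbounded I would exhaust it by bounded sub-strips $\{|x|<R\}$, solve there, and pass to the limit using an interior gradient estimate — this is where I expect the main obstacle: establishing a local gradient bound for solutions of \eqref{eq1} that is uniform as $R\to\infty$, so that the sequence of solutions on truncated strips is precompact in $C^2_{loc}$. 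The translating-soliton operator does admit such estimates (they are available in the cited works on translating solitons and, for general $\alpha$, one can adapt Serrin's or Korevaar-type arguments), but verifying the constants depend only on the distance to $\partial\Omega_m$ and on $\alpha$, $m$, and local bounds for $f$ — not on $R$ — is the technical heart of the proof. Once that is in place the limit function solves \eqref{eq1}-\eqref{eq2} on the full strip, completing both cases.
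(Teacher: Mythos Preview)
Your overall Perron strategy is right, but the barrier construction has a genuine gap. You propose $\overline{u}(x,y)=f(x)+g(y)$ (with $g$ the one-variable grim-reaper profile) as a comparison function, but the operator $Q$ is fully nonlinear in the gradient and this separated ansatz is not in general a sub- or supersolution: a direct computation gives
\[
\mbox{div}\left(\dfrac{D(f+g)}{\sqrt{1+|D(f+g)|^2}}\right)=\dfrac{f''(1+g'^2)+g''(1+f'^2)}{(1+f'^2+g'^2)^{3/2}},
\]
and there is no reason this is controlled by $(1+f'^2+g'^2)^{-\alpha/2}$ from either side (both $f''\ge 0$ and $g''>0$, so the sign goes the wrong way for a supersolution). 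So neither the global comparison pair nor the boundary barriers come from $f+g$.

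The paper resolves this with two different objects. For the lower barriers it uses \emph{tilted} $\alpha$-grim reapers
\[
w_\theta(x,y)=\frac{1}{(\cos\theta)^{\alpha+1}}\,w\bigl((\cos\theta)^\alpha y\bigr)+(\tan\theta)\,x+a,
\]
which are \emph{exact solutions} of (\ref{eq1}) whose restriction to each component of $\partial\Omega_m$ is an affine function of $x$ with slope $\tan\theta$. Convexity of $f$ is used precisely here: at each boundary point $(x_0,\pm m)$ the tangent line to $f$ lies below $f$, so one can choose $\theta$ and $a$ with $w_\theta\le\varphi_f$ on $\partial\Omega_m$ and equality at $(x_0,\pm m)$. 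For the upper barrier the paper invokes the \emph{minimal graph} $v^0$ over $\Omega_m$ with boundary values $\varphi_f$ (existence due to Collin, after Jenkins--Serrin); since $Q[v^0]<0$ this is a strict supersolution with $v^0=\varphi_f$ on $\partial\Omega_m$. The Perron function is then $v=\inf\{u:u\ \mbox{superfunction},\ w_\theta\le u\le v^0\ \forall\,w_\theta\in\mathcal G\}$, and the pair $(w_\theta^{p},v^0)$ pinches $v$ to $\varphi_f(p)$ at each boundary point $p$. Your exhaustion-by-bounded-strips idea could in principle be made to work, but you would still need genuine barriers at $\partial\Omega_m$, and the tilted-grim-reaper/minimal-surface pair is the missing ingredient.
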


This result was proved for the constant mean curvature equation ($\alpha=0$ in (\ref{eq1})) by Collin in \cite{co}.

 This paper is organized as follow. In Section \ref{sec2} we find the solutions of the one-dimensional case of (\ref{eq1}) and then in Section \ref{sec3} we recall  the maximum principle for Equation (\ref{eq1}) and some properties of the  solutions when $\Omega$ is a bounded domain. Finally in Section \ref{sec4}    we prove Theorem \ref{t2} by means of the Perron process of sub and supersolutions.

\section{The family of $\alpha$-grim reapers}\label{sec2}

In this section we generalize the grim reapers for every $\alpha>0$. Consider $\r^3$ the Euclidean space with canonical coordinates $(x,y,z)$. It is immediate that any translation of the space and  a rotation about an axis parallel to $e_3=(0,0,1)$ preserves the condition to be an $\alpha$-translating soliton.  In this section we study those $\alpha$-translating solitons that depend only on one variable, or in other words, the  surface is a cylindrical surface where all the rulings are parallel to a fixed direction. For our convenience, we consider firstly  solutions that are invariant in one direction orthogonal to $e_3$, namely, to the $x$-axis. The   surface is then generated by a curve $\gamma(s)=(y(s),z(s))$ contained in the $yz$-plane, which we suppose parametrized by the arc-length. Let $y'(s)=\cos\phi(s)$, $z'(s)=\sin\phi(s)$ for some angle function $\phi$. The  surface parametrizes by $X(x,s)=x(1,0,0)+(0,\gamma(s))=(x,y(s),z(s))$ and its  mean curvature is $H(x,s)=\phi'(s)/2$, where $\phi'(s)$ is the curvature of $\gamma(s)$. Definitively  the surface $X(x,s)$ is an $\alpha$-translating soliton if and only if $\gamma$ satisfies 
\begin{equation}\label{e1}
\begin{split}
&y'(s)=\cos\phi(s)\\
&z'(s)=\sin\phi(s)\\
&\phi'(s)=(\cos\phi(s))^\alpha.
\end{split}
\end{equation}
It is immediate that   a vertical line satisfies (\ref{e1}), indeed, $\gamma$ can be expressed as $\gamma(s)=(y_0,\pm s+z_0)$, $(y_0,z_0)\in\r^2$,  with $\phi(s)=\pm\pi/2$. After a translation in $\r^2$, we can suppose that $\gamma$ goes through the origin, hence $\gamma(0)=(0,0)$. 
 
\begin{proposition} \label{p-cl}
Let $\alpha>0$. There exists a family of solutions $\gamma=\gamma(s)$ of (\ref{e1}) satisfying 
\begin{equation}\label{e2}
y(0)=z(0)=0,\ \phi(0)=0,
\end{equation}
and with  the following properties:
\begin{enumerate}
\item $\gamma$ is a symmetric convex graph about the $y$-axis with one global minimum.
\item  Let $\gamma(y)=(y,z(y))$,   $z:(-d,d)\rightarrow\r$ with $d=d(\alpha)\leq\infty$ and $(-d,d)$ is the maximal domain of $z(y)$. Then we have: 
\begin{enumerate}
\item $d(\alpha)=\infty$ if $\alpha>1$.
\item $d(\alpha)<\infty$ if $0<\alpha\leq 1$ and $\gamma$ is asymptotic to the vertical lines $y=\pm d(\alpha)$.
\end{enumerate}
\end{enumerate}
\end{proposition}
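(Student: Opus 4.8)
The plan is to analyze the ODE system (\ref{e1})–(\ref{e2}) directly. Since $\phi(0)=0$ and $\phi'(s)=(\cos\phi(s))^\alpha>0$ near $s=0$, the angle $\phi$ is strictly increasing as long as $|\phi|<\pi/2$; by symmetry of the equation under $s\mapsto -s$, $\phi\mapsto-\phi$ (which is consistent since $(\cos\phi)^\alpha$ is even in $\phi$), the solution with $\phi(0)=0$ satisfies $\phi(-s)=-\phi(s)$, hence $y(-s)=-y(s)$ and $z(-s)=z(s)$. This already gives the symmetry about the $y$-axis. Because $y'(s)=\cos\phi(s)>0$ on the interval where $|\phi|<\pi/2$, we can invert $s\mapsto y(s)$ and write the profile as a graph $z=z(y)$; and $z'(s)=\sin\phi(s)$ has the sign of $\phi(s)$, which has the sign of $s$, hence the sign of $y$, so $z$ has a strict global minimum at $y=0$. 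Convexity of $z(y)$ follows since $z''(y) = \phi'(s)/y'(s)^{\,?}$ — more precisely, from $dz/dy = \tan\phi$ and $d\phi/dy = (\cos\phi)^\alpha/\cos\phi = (\cos\phi)^{\alpha-1}$ one gets $d^2z/dy^2 = (\cos\phi)^{\alpha-3}\cdot(\text{positive})>0$.

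For the quantitative part, the key is to compute the maximal half-width $d(\alpha)$. Using $d\phi/dy = (\cos\phi)^{\alpha-1}$ and separating variables, as $y$ runs over $(0,d)$ the angle $\phi$ runs over $(0,\pi/2)$, so
\begin{equation}\label{e-dint}
d(\alpha)=\int_0^{\pi/2}\frac{d\phi}{(\cos\phi)^{\alpha-1}}=\int_0^{\pi/2}(\cos\phi)^{1-\alpha}\,d\phi.
\end{equation}
This is a Wallis-type integral. It is finite precisely when $1-\alpha>-1$, i.e. $\alpha<2$, and in particular for all $0<\alpha\le 1$; but that only bounds the $y$-excursion while $\phi$ stays below $\pi/2$. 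To see that the graph is genuinely defined on all of $\r$ when $\alpha>1$, I would instead argue that for $\alpha>1$ the angle $\phi$ cannot reach $\pi/2$ in finite $y$: near $\phi=\pi/2$ write $\psi=\pi/2-\phi$, so $d\psi/dy=-(\sin\psi)^{\alpha-1}\approx-\psi^{\alpha-1}$, whose solution with $\alpha>1$ reaches $\psi=0$ only as $y\to\infty$; equivalently the integral $\int^{\pi/2}(\cos\phi)^{1-\alpha}d\phi$ \emph{diverges} when $1-\alpha\le-1$, i.e. $\alpha\ge 2$ — so I must be more careful: for $1<\alpha<2$ the integral (\ref{e-dint}) converges, giving finite $d$, which contradicts the claim. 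The resolution is that for $\alpha>1$ the correct statement involves $z\to\infty$: one should track whether $y$ stays bounded OR $z$ blows up first. I would recompute: actually the ruling is that the maximal $s$-interval and the behavior of $y(s)$ must both be examined. Let me restate: $y(s)=\int_0^s\cos\phi$, and $\phi(s)\to\pi/2$ either in finite or infinite $s$-time; from $d\phi/ds=(\cos\phi)^\alpha$, the time to reach $\pi/2$ is $\int_0^{\pi/2}(\cos\phi)^{-\alpha}d\phi$, finite iff $\alpha<1$. Hence: if $\alpha<1$, $\phi\to\pi/2$ in finite arclength $s_0$, and $y(s_0)=\int_0^{s_0}\cos\phi\,ds=\int_0^{\pi/2}(\cos\phi)^{1-\alpha}d\phi<\infty$, so $d(\alpha)<\infty$ and (reading $z'=\sin\phi\to1$) the curve is asymptotic to the vertical line $y=d(\alpha)$. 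If $\alpha>1$, $\phi\to\pi/2$ only as $s\to\infty$, and then $y(s)=\int_0^s\cos\phi\,ds$: I must check $y\to\infty$. For large $s$, $\pi/2-\phi\sim c\,s^{-1/(\alpha-1)}$, so $\cos\phi\sim c\,s^{-1/(\alpha-1)}$, and $\int^\infty s^{-1/(\alpha-1)}ds$ diverges iff $1/(\alpha-1)\le 1$, i.e. $\alpha\ge2$. So for $\alpha\ge2$, $y\to\infty$: $d=\infty$. For $1<\alpha<2$, $y$ stays bounded — but then $z(s)=\int_0^s\sin\phi\,ds\to\infty$ since $\sin\phi\to1$, so the graph $z(y)$ is defined on a bounded interval $(-d,d)$ with $d<\infty$ and $z\to\infty$ at the endpoints, again asymptotic to vertical lines.

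\textbf{The main obstacle} is thus getting the case division exactly right — the naive reading "$d(\alpha)=\infty\iff\alpha>1$" requires that the subtle range $1<\alpha<2$ behave like $\alpha\le1$ (bounded strip) rather than like $\alpha\ge2$. I would resolve this by carefully comparing the three quantities: the arclength $s_\infty=\int_0^{\pi/2}(\cos\phi)^{-\alpha}d\phi$ to reach $\phi=\pi/2$, the $y$-excursion $\int_0^{\pi/2}(\cos\phi)^{1-\alpha}d\phi$, and the $z$-excursion $\int_0^{\pi/2}\sin\phi\,(\cos\phi)^{-\alpha}d\phi$, using that $\int_0^{\pi/2}(\cos\phi)^\beta d\phi<\infty\iff\beta>-1$. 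The remaining steps are routine: local existence of $\gamma$ near $s=0$ by Picard–Lindelöf, continuation up to the first time $|\phi|=\pi/2$, the symmetry via uniqueness, and the convexity/monotonicity of $z(y)$ from the signs of $dz/dy$ and $d^2z/dy^2$ computed above. One final check I would include is that $z(0)$ is indeed the global minimum over the \emph{whole} maximal domain, which is immediate since $z'(s)=\sin\phi(s)$ vanishes only at $s=0$ on $(-s_\infty,s_\infty)$.
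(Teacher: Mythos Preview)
Your qualitative analysis of part 1 (symmetry via $s\mapsto-s$, convexity from $\phi'>0$, the unique minimum at $y=0$) is correct and parallels the paper; where you invoke the explicit symmetry of the system, the paper instead argues that $\phi$ can never reach $\pm\pi/2$ by uniqueness against the vertical-line solution. For part 2 the paper gives essentially no computation: it merely says ``the integration of (\ref{hyper}) is given in terms of the hypergeometric function ${}_2F_1$, where it is known that\ldots'' and then states the dichotomy at $\alpha=1$.

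Your explicit computation, by contrast, is correct and decisive. From $d\phi/dy=(\cos\phi)^{\alpha-1}$ one obtains
\[
d(\alpha)=\int_0^{\pi/2}(\cos\phi)^{\,1-\alpha}\,d\phi
\quad\Bigl(=\int_0^\infty(1+p^2)^{-(3-\alpha)/2}\,dp\ \text{with }p=z'\Bigr),
\]
which is finite precisely when $1-\alpha>-1$, i.e.\ $\alpha<2$. Thus the correct dichotomy is $d(\alpha)<\infty\iff\alpha<2$, not $\alpha\le1$. For $1<\alpha<2$ the profile sits over a bounded strip with $z\to\infty$ at the edges, exactly like $\alpha=1$; only for $\alpha\ge2$ is the graph entire. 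The paper's own examples ($\alpha=1$ with $d=\pi/2$; $\alpha=2,3$ with $d=\infty$) are consistent with threshold $2$, not $1$. So your ``main obstacle'' is not a gap in your argument but an error in the stated proposition: there is nothing for you to ``resolve'', and you should simply record that the threshold is $\alpha=2$.

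Two smaller points. First, for $0<\alpha<1$ your description ``asymptotic to the vertical line'' needs correction: the $z$-excursion is $\int_0^{\pi/2}\sin\phi\,(\cos\phi)^{-\alpha}\,d\phi=\int_0^1 u^{-\alpha}\,du=1/(1-\alpha)<\infty$, so $z$ stays bounded and the curve meets $y=d(\alpha)$ at finite height with vertical tangent (hence the ``asymptotic'' wording in the proposition is also inexact in that range). Second, the paper's uniqueness argument that $\phi$ never attains $\pi/2$ actually breaks down for $\alpha<1$, since $(\cos\phi)^\alpha$ is not Lipschitz at $\phi=\pi/2$; your observation that $\phi\to\pi/2$ in \emph{finite} arclength when $\alpha<1$ is correct and exhibits this failure directly.
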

\begin{proof}
Since the derivatives in (\ref{e1}) are bounded, then the maximal domain of the solution  $\gamma$ of (\ref{e1}) is $\r$. We prove that $\phi$ never attains the values $\pm\pi/2$.  If at some point $s=s_0$, it holds $\phi(s_0)= \pi/2$ (similar in case $-\pi/2$) and it is immediate that 
$$\bar{y}(s)=y(s_0),\ \bar{z}(s)=s-s_0+z(s_0),\ \bar{\phi}(s)=\pi/2$$
is a solution of (\ref{e1}) with the same initial condition at $s=s_0$ as the initial solution $\{y(s),z(s),\phi(s)\}$. By uniqueness, $\gamma(s)=(y(s_0),s-s_0+z(s_0))$, $\gamma$ is a straight line  and $\phi(s)=\pi/2$ for all $s\in\r$. This is a contradiction   because the initial condition at $s=0$ is $\phi(0)=0$. 

Therefore  $\gamma$ is a graph on the $y$-axis and we write $z=z(y)$. Taking into account that $\phi(s)\in (-\pi/2,\pi/2)$, we find that  $\phi'(s)>0$ and this implies that the (signed) curvature of $\gamma$ is positive proving that $z=z(y)$ is a convex function. On the other hand, Equations (\ref{e1})-(\ref{e2}) write now as 
\begin{equation}\label{hyper}
\frac{z''}{1+z'^2}=\frac{1}{(1+z'^2)^{\frac{\alpha-1}{2}}},\quad z(0)=0,\ z'(0)=0.
\end{equation}
Hence it is immediate that $z=z(y)$ is symmetric about $y=0$ with a global minimum at $y=0$. The integration of (\ref{hyper}) is given in terms of first hypergeometric function $_2F_1(a,b;c;x)$, where it is is known that  when $\alpha\in (0,1]$, the domain of $z=z(y)$ is a bounded (symmetric) interval $(-d,d)$, $d=d(\alpha)$ depending on $\alpha$ with $\lim_{y\rightarrow\pm d}z(y)=\pm\infty$ and if $\alpha>1$, then $z=z(y)$ is defined on the entire $y$-axis.
\end{proof}

\begin{remark} Some explicit solutions of (\ref{e1}) can be obtained by simple quadratures:
\begin{enumerate}
\item Case $\alpha=1$. Then $z(y)=-\log(\cos(y))$, $\gamma$ is the grim reaper and $d=\pi/2$.
\item Case $\alpha=2$. Then $z(y)=\cosh(y)$, $\gamma$ is the catenary and $d=\infty$.
\item Case $\alpha=3$. Then $z(y)=y^2/2$, $\gamma$ is the parabola and $d=\infty$.
\end{enumerate}
In the limit case $\alpha=0$, we have  $z(y)=-\sqrt{1-y^2}$, $\gamma$ is a halfcircle and $d=1$.
\end{remark}

Recall that each solution $\gamma$ given in Proposition \ref{p-cl} corresponds with an $\alpha$-translating soliton where the rulings are orthogonal to the vector $e_3$. In order to find supersolutions in the Perron process, we need to use   $\alpha$-translating solitons whose rulings are not necessary orthogonal to $e_3$.  Having this in mind, we consider the $\alpha$-translating solitons of Proposition \ref{p-cl} up to scaling and rotating about the $y$-axis.

\begin{definition}\label{d-22}
 Let $\alpha>0$. If $w=w(y)$ is a solution of (\ref{hyper}), we define the uniparametric family of $\alpha$-grim reapers $w_\theta=w_\theta(x,y)$ as  
$$w_\theta(x,y)=\frac{1}{(\cos\theta)^{\alpha+1}}w((\cos\theta)^\alpha y)+(\tan\theta)x+a,$$
where    $\theta\in (-\pi/2,\pi/2)$ and $a\in\r$. 
\end{definition}

As consequence of Proposition \ref{p-cl},  if $\alpha>1$ $w_\theta$ is defined in $\r^2$  and if $0<\alpha\leq 1$, then the domain of $w_\theta$ is the strip 
$$\Omega_{d,\theta}=\{(x,y)\in\r^2: -\frac{d}{(\cos\theta)^\alpha}<y<\frac{d}{(\cos\theta)^\alpha}\}.$$
 In particular, if $0\leq \theta_1<\theta_2$, it follows that  $\Omega_{\theta_1}\subset\Omega_{\theta_2}$ and thus the domain 
 \begin{equation}\label{eq-do}
 \Omega_d:=\Omega_{d,0}=\{(x,y)\in\r^2:-d<y<d\}
 \end{equation}
 is contained in   $\Omega_{d,\theta}$ for all $\theta\in (-\pi/2,\pi/2)$.

 \section{Some properties of the solutions of Equation (\ref{eq1})}\label{sec3}

 In this section we collect some properties of the solutions of   (\ref{eq1}) with a special interest in the control of $|u|$ and $|Du|$ when $\Omega$ is a bounded domain. Here we make use of explicit examples of $\alpha$-translating solitons to get a priori $C^0$ estimates. Firstly we need to recall that Equation (\ref{eq1}) satisfies a maximum principle which is a   consequence of the comparison principle (\cite[Th. 10.1]{gt}).

  \begin{proposition}[Touching principle] Let $\Sigma_i$  be two $\alpha$-translating solitons, $i=1,2$.   If $\Sigma_1$  and $\Sigma_2$ have  a common tangent interior point and $\Sigma_1$ lies above $\Sigma_2$ around $p$, then $\Sigma_1$ and $\Sigma_2$ coincide at an open set around $p$. The same holds if $p$ is a common boundary point and the tangent boundaries coincide at $p$.
\end{proposition}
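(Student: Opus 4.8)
The plan is to recognise this as the strong maximum principle (E.~Hopf) applied to the difference of the two surfaces once they are written as graphs. First I would reduce to a local graph comparison: after a rigid motion of $\r^3$ (which preserves the class of $\alpha$-translating solitons up to replacing the vertical vector $e_3$ by its image) one may assume that $p$ is the origin, that the common tangent plane is the horizontal plane, and that near $p$ both surfaces are graphs $z=u_i(x,y)$, $i=1,2$, over a common neighbourhood $U$ of $0$ with $u_i(0)=0$, $Du_i(0)=0$, and with $u_1\ge u_2$ on $U$ (this is the hypothesis that $\Sigma_1$ lies above $\Sigma_2$). Each $u_i$ then solves a quasilinear equation $\mathcal{Q}[u]=0$ where
\[
\mathcal{Q}[u]=a^{ij}(Du)\,D_{ij}u+b(Du),\qquad a^{ij}(q)=\frac{1}{\sqrt{1+|q|^2}}\Bigl(\delta_{ij}-\frac{q_iq_j}{1+|q|^2}\Bigr),
\]
the matrix $(a^{ij}(q))$ being positive definite (so $\mathcal{Q}$ is elliptic) and $b$ being smooth; the decisive structural point is that $\mathcal{Q}$ depends only on $Du$ and $D^2u$, not on $u$ itself or on the base point.

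Next I would linearise the difference. Setting $w:=u_1-u_2\ge 0$ and writing
\[
0=\mathcal{Q}[u_1]-\mathcal{Q}[u_2]=\int_0^1\frac{d}{dt}\,\mathcal{Q}[u_2+tw]\,dt,
\]
differentiation under the integral sign shows that $w$ satisfies a linear equation $\widetilde a^{ij}(x)D_{ij}w+\widetilde b^{i}(x)D_iw=0$ on $U$, whose coefficients are bounded and continuous because $u_1,u_2\in C^2(\overline{\Omega})$, with $(\widetilde a^{ij})$ positive definite and hence, after shrinking $U$, uniformly elliptic. Crucially, \emph{there is no zeroth order term}, precisely because $\mathcal{Q}$ carries no explicit $u$-dependence.

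To finish, in the interior case $w\ge 0$ attains its interior minimum value $0$ at $p$, so the strong maximum principle (\cite[Theorem 3.5]{gt}) forces $w\equiv 0$ on a neighbourhood of $p$, i.e.\ $\Sigma_1$ and $\Sigma_2$ coincide near $p$. In the boundary case one works over a half-neighbourhood $U^{+}$ with $0\in\partial U^{+}$; the assumption that the tangent boundaries coincide at $p$, together with the common tangent plane, gives $Dw(0)=0$. If $w$ were not identically zero near $p$, the interior strong maximum principle would give $w>0$ in the interior of $U^{+}$, and Hopf's boundary point lemma (\cite[Lemma 3.4]{gt}), applicable because $U^{+}$ satisfies an interior sphere condition at $0$, would force $\partial w/\partial\nu(0)\neq 0$ for the outer unit normal $\nu$, contradicting $Dw(0)=0$; hence $w\equiv 0$ near $p$.

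The routine parts are the geometric set-up in the first step and the bookkeeping of the rotated vertical direction; I expect the only point requiring care to be the linearisation, namely checking that the difference of the two quasilinear operators is genuinely a uniformly elliptic linear operator with bounded, continuous coefficients and no (badly signed) zeroth order term, so that the classical Hopf maximum principles apply verbatim.
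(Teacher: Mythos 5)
Your proof is correct and is essentially the argument the paper has in mind: the paper offers no proof of its own, merely citing the comparison principle \cite[Th.~10.1]{gt}, and your reduction to a linear, uniformly elliptic equation for $w=u_1-u_2$ with no zeroth-order term (possible precisely because the operator depends only on $Du$ and $D^2u$), followed by the Hopf interior strong maximum principle and boundary point lemma, is exactly the standard proof of that principle. One small caveat: after a general rigid motion the right-hand side becomes $\langle N,\mathbf{a}\rangle^{\alpha}$ for the rotated vertical direction $\mathbf{a}$, which for $0<\alpha<1$ fails to be Lipschitz where $\langle N,\mathbf{a}\rangle=0$ (i.e.\ when the common tangent plane is vertical), so there your first-order coefficients need not be bounded; in the graphical situations where the paper actually applies the principle this degeneracy does not arise.
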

 
As a direct application of the touching principle, {\it there do not exist compact $\alpha$-translating solitons} because if $\Sigma$ were a such surface, we can place a vertical plane $\Pi$ tangent to $\Sigma$ and leaving $\Sigma$ in one side of $\Pi$: this is impossible by the touching principle because both $\Pi$ and $\Sigma$ are $\alpha$-translating solitons.

Besides the $\alpha$-grim reapers defined in Section \ref{sec2}, other family of useful $\alpha$-translating solitons is formed by the surfaces of revolution about a vertical axis, or equivalently, the radial solutions of Equation (\ref{eq1}). There are two types of rotational $\alpha$-translating solitons. The first type are convex entire graphs on $\r^2$ and the second ones are of winglike-shape: we refer to the reader to \cite{css} when $\alpha=1$ and to \cite[Th. 1.1]{sw2}  for the general $\alpha>0$. We are interested in the first ones to find $C^0$ estimates of the solutions of (\ref{eq1}). After a horizontal translation, we suppose that the rotation axis is the $z$-axis. 

\begin{definition} For each $\alpha>0$, and up to a constant, there exists an entire radially symmetric strictly convex solution of (\ref{eq1}), namely,  $\mathbf{b}=\mathbf{b}(r)$, $r^2=x^2+y^2$, with a global minimum   in the $z$-axis. We call the corresponding graph   $\mathcal{B}$  as the $\alpha$-bowl soliton.
\end{definition}

Up to a constant, suppose that the minimum of $\mathbf{b}=\mathbf{b}(r)$ is the value $0$, that is, $\mathbf{b}(0)=0$, so  the origin of $\r^3$ is the minimum of $\mathcal{B}$. For $t>0$, we intersect $\mathcal{B}$ with the horizontal plane of equation $z=t$ obtaining a compact  cap $\mathcal{B}_R$ whose boundary is a circle of radius $R>0$ included in the plane $z=t$. Moreover, $R\rightarrow\infty$ as $t\rightarrow\infty$. As a conclusion,  we have proved that  for any $R>0$, there exists a radial solution of (\ref{eq1})-(\ref{eq2}) with $\varphi=0$ on $\partial\Omega$ and $\Omega$ is a disk of radius $R>0$.   

  \begin{proposition} \label{t31}
   Let $\Omega\subset\r^2$ be a bounded domain. If  $u$ is a solution of (\ref{eq1})-(\ref{eq2}), we have
  \begin{enumerate}
  \item The solution is unique.
  \item There exists $C=C(\Omega,\varphi)$ a constant depending only on $\varphi$ and $\Omega$ such that 
\begin{equation}\label{eh}
C\leq u\leq \max_{\partial\Omega}\varphi\quad \mbox{in $\Omega$}.
\end{equation}
  \item  The maximum of the gradient is attained at some boundary point: 
  $$\max_{\overline{\Omega}}|Du|=\max_{\partial\Omega}|Du|.$$
\end{enumerate}
\end{proposition}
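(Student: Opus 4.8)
The plan is to treat the three assertions separately, each as a direct consequence of the touching principle together with the explicit comparison surfaces constructed above. For part (1), suppose $u_1, u_2$ are two solutions with the same boundary data $\varphi$. I would consider the function $u_1 - u_2$ on $\overline{\Omega}$; since both graphs are $\alpha$-translating solitons agreeing on $\partial\Omega$, a vertical translate of $\Sigma_{u_2}$ that is moved up until it last touches $\Sigma_{u_1}$ must have an interior contact point (if the maximum of $u_1-u_2$ were positive) or a boundary contact point where the tangent planes coincide, and the touching principle forces $\Sigma_{u_1}=\Sigma_{u_2}$ on an open set; a standard connectedness argument then gives $u_1\equiv u_2$. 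Equivalently, one may invoke the comparison principle \cite[Th. 10.1]{gt} directly, since (\ref{eq1}) is of the form treated there.

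For the upper bound in part (2), the constant function (or rather a horizontal plane) is a solution of (\ref{eq1}) only in the limiting sense, so instead I would argue that $u$ cannot have an interior maximum exceeding $\max_{\partial\Omega}\varphi$: at an interior maximum point the left side of (\ref{eq1}) is $\le 0$ while the right side is strictly positive, a contradiction; hence $\max_{\overline\Omega} u=\max_{\partial\Omega}\varphi$. For the lower bound, this is where the $\alpha$-bowl soliton $\mathcal{B}$ enters: since $\Omega$ is bounded, choose $R>0$ with $\overline\Omega$ contained in a disk $D_R$ of radius $R$, and use a suitable vertical translate of the cap $\mathbf b(r)$ (translated so that it lies below the graph of $\varphi$ on $\partial\Omega$, which is possible because $\mathbf b$ is bounded on $D_R$) as a subsolution; the touching principle applied to $\Sigma_u$ and the translated bowl cap shows $\Sigma_u$ stays above it, giving a lower bound $C$ depending only on $R$ (hence on $\Omega$) and on $\min_{\partial\Omega}\varphi$. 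I would record $C$ as $\min_{\partial\Omega}\varphi - \big(\max_{D_R}\mathbf b - \min_{\partial\Omega}\mathbf b|_{\partial\Omega}\big)$ or a similarly explicit expression.

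For part (3), the interior gradient estimate, the cleanest route is to differentiate (\ref{eq1}): if $v$ denotes any first partial derivative $u_{x}$ or $u_{y}$, or more invariantly any directional derivative, then $v$ satisfies a linear second-order elliptic equation with no zeroth-order term (the right side of (\ref{eq1}) depends only on $|Du|$, so differentiating produces only first-order terms in $v$), and hence $v$ obeys the maximum principle; applying this to $|Du|^2$, which is then a subsolution of a linear elliptic equation, shows $|Du|^2$ attains its maximum on $\partial\Omega$. Alternatively, and perhaps more in keeping with the geometric flavour of the paper, one can use the touching principle with vertical planes: at an interior point where $|Du|$ were maximal one could slide a tilted grim reaper or a vertical plane to contradict maximality, but the linear-PDE argument is shorter and fully rigorous.

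The main obstacle is the lower bound in part (2): one must be careful that the $\alpha$-bowl cap, after the necessary vertical translation, genuinely lies below $\Sigma_u$ on all of $\partial\Omega$ and that the resulting constant $C$ depends only on $\Omega$ and $\varphi$ and not on $u$ itself. This requires knowing $\mathbf b$ is bounded on bounded sets (clear, since $\mathbf b$ is a smooth entire function) and quantifying the translation in terms of $\max_{D_R}\mathbf b$ and $\min_{\partial\Omega}\varphi$ only. The uniqueness and the gradient bound are routine once the touching principle and the differentiated equation are in hand.
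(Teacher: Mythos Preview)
Your proposal is correct and follows essentially the same route as the paper: uniqueness via the comparison principle \cite[Th.~10.1]{gt}, the upper bound from positivity of the right-hand side of (\ref{eq1}), the lower bound by sliding an $\alpha$-bowl cap from below (the paper records the explicit constant $C=b_m+\min_{\partial\Omega}\varphi$ with $b_m$ the minimum of the cap over $D_R\supset\overline{\Omega}$), and the gradient bound by differentiating (\ref{eq1}) and applying the maximum principle to the resulting linear elliptic equation for the first derivatives. The only cosmetic difference is that the paper applies the maximum principle to each $v^k=u_k$ individually rather than to $|Du|^2$.
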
 
   
   \begin{proof}
  \begin{enumerate}
  \item The uniqueness of solutions holds because   the right hand side of (\ref{eq1}) is non-decreasing on $u$ (\cite[Th. 10.1]{gt}), 
  \item Since the the right hand side of (\ref{eq1}) is non-negative, then $\sup_\Omega u=\max_{\partial\Omega}u=\max_{\partial\Omega}\varphi$. The lower estimate for $u$ is obtained by using   $\alpha$-bowl solitons as comparison surfaces. Indeed, let us take    a round disc $D_R\subset\r^2$ of radius $R>0$ sufficiently big so $\overline{\Omega}\subset D_R$. Consider the $\alpha$-bowl soliton $\mathcal{B}_R$ given by $\mathbf{b}(r)$ defined in $D_R$ with $\mathbf{b}_{|\partial D_R}=0$ and denote by $b_m$ the minimum value of $\mathbf{b}$. We move down $\mathcal{B}_R$ sufficiently so $\Sigma_u$ lies above $\mathcal{B}_R$, that is, if $(x,y,z)\in\Sigma_u$, $(x,y,z')\in\mathcal{B}_R$, then $z>z'$. Then we move up $\mathcal{B}_R$ until the first touching point with $\Sigma_u$. If the first contact occurs at some interior point, then the touching principle implies   $\Sigma_u\subset \mathcal{B}_R$. In other case,    if $\Sigma_u\not\subset \mathcal{B}_R$, the first contact point occurs when $\mathcal{B}_R$ touches a boundary point of $\Sigma_u$. In both cases, we  conclude $b_m\leq u-\min_{\partial\Omega}\varphi$ and consequently   $C:= b_m+\min_{\partial\Omega}\varphi\leq u$.
  \item Equation (\ref{eq1}) can be expressed as     
\begin{equation}\label{eq4}
(1+|Du|^2)\Delta u-u_iu_ju_{ij}-(1+|Du|^2)^{\frac{3-\alpha}{2}}=0.
\end{equation}
where $u_i=\partial u/\partial x_i$, $i=1,2$, and we assume the summation convention of repeated indices.  Define the function  $v^k=u_k$, $k=1,2$ and we differentiate (\ref{eq4}) with respect to $x_k$, obtaining for each $k=1,2$,
 \begin{equation}\label{eq3}
 \left((1+|Du|^2)\delta_{ij}-u_iu_j\right)v_{ij}^k+2\left(u_i\Delta u-u_ju_{ij}-  \frac{3-\alpha}{2}u_i(1+|Du|^2)^\frac{1-\alpha}{2} \right)v_i^k=0,
 \end{equation}
 where $\delta_{ij}$ is the Kronecker delta. We deduce that $|v^k|$ and then $|Du|$ has not an interior maximum. In particular, if $u$ is a solution of (\ref{eq1}), the maximum of $|Du|$ on the compact set $\overline{\Omega}$ is attained at some boundary point,  proving the result. 
 
  \end{enumerate}
  \end{proof}
     
  \section{Proof of Theorem \ref{t2}}\label{sec4}
  
  In this section we prove Theorem \ref{t2} in a successive number of steps. First, we define the operator 
$$Q[u]= \mbox{div}\left(\dfrac{Du}{\sqrt{1+|Du|^2}}\right)-\left(\frac{1}{\sqrt{1+|Du|^2}}\right)^\alpha.$$
For   the Perron process we need to have a subsolution of (\ref{eq1})-(\ref{eq2}). In the next result, $f$ is not necessarily a convex function.

\begin{proposition}\label{p-min} Let $\Omega\subset\r^2$ be a strip. If $f$ is a continuous function defined in $\r$, then there exists a solution $v^0$ of the Dirichlet problem
\begin{equation}\label{em1}
\begin{split} &\mbox{div}\left(\dfrac{Du}{\sqrt{1+|Du|^2}}\right)=0 \quad \mbox{in $\Omega$}\\
&u=\varphi_f\quad \mbox{on $\partial\Omega$}
\end{split}
\end{equation}
with the property $f(x)<v^0(x,y)$ for all $(x,y)\in\Omega$.
\end{proposition}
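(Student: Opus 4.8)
The plan is to construct $v^0$ as a limit of solutions of the minimal surface equation on an exhaustion of $\Omega_m$ by long thin rectangles, imposing on the two short sides an infinite (Jenkins--Serrin type) boundary condition so that the limit is forced strictly above the cylinder $z=f(x)$, and then to read the strict inequality off from the maximum principle.

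Writing $\Omega_m=\{|y|<m\}$, I would first, for each $k>m$, solve the minimal surface equation on $R_k=(-k,k)\times(-m,m)$: as $R_k$ is a bounded convex polygon whose two vertical sides have length $2m<2k$, the Jenkins--Serrin theorem (in the form allowing prescribed continuous data on part of the boundary) gives a solution $u_k$ equal to $\varphi_f$, that is, with $u_k(x,\pm m)=f(x)$, on the two horizontal sides, and tending to $+\infty$ on the two vertical sides $\{x=\pm k\}$. Then I would observe that the family is monotone: for $k'>k$ one has $u_{k'}=f=u_k$ on the horizontal part of $\partial R_k$ and $u_{k'}<+\infty=u_k$ on the vertical part, so $u_{k'}\le u_k$ on $R_k$ by the comparison principle (\cite[Th. 10.1]{gt}); and a subsolution $w$ of the minimal surface equation lying below $f$ on the lines $y=\pm m$ (for instance an affine function, or simply $\inf_{\r}f$ when $f$ is bounded below) gives $u_k\ge w$, so the $u_k$ are locally bounded. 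The interior gradient estimate for minimal graphs then makes $|Du_k|$ uniformly bounded on compact subsets of the open strip, and elliptic regularity yields a subsequence converging in $C^2_{\mathrm{loc}}(\Omega_m)$ to a solution $v^0$ of $\mbox{div}(Dv^0/\sqrt{1+|Dv^0|^2})=0$. Since the two components of $\partial\Omega_m$ are straight lines, the classical barriers for the minimal surface equation at a flat boundary apply, and $v^0$ extends continuously to $\overline{\Omega_m}$ with $v^0=\varphi_f$ on $\partial\Omega_m$.

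For the strict inequality $f<v^0$ I would compare with the cylinder $z=f(x)$: over $R_k$ it satisfies $\mbox{div}(Df/\sqrt{1+|Df|^2})=f''/(1+f'^2)^{3/2}$, so when $f$ is convex it is a subsolution; it agrees with $u_k$ on the horizontal sides and stays below $u_k=+\infty$ on the vertical sides, hence $f\le u_k$ in $R_k$ by comparison, and $f<u_k$ in the interior by the strong maximum principle (equality on an open set would force $u_k\equiv f$, against the infinite data). Passing to the limit gives $f\le v^0$; and if $v^0(x_0,y_0)=f(x_0)$ at some interior point, then, taking the supporting line $\ell$ of $f$ at $x_0$, the graph of $v^0$ would touch the plane $z=\ell(x)$ from above at the interior point $(x_0,y_0)$, so $v^0-\ell$ --- a solution of a linear elliptic equation with no zero-order term that attains the interior minimum value $0$ --- vanishes identically by the strong maximum principle, forcing $f$ to be affine; ruling out (or slightly perturbing away) that degenerate situation gives $f(x)<v^0(x,y)$ on all of $\Omega_m$. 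For $f$ merely continuous the same comparison would be run with a convex minorant of $f$ in place of $f$.

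I expect the main difficulty to be caused by the unboundedness of $\Omega_m$: with only finite data on the short sides the $u_k$ could collapse to $f$ in the limit, so the infinite Jenkins--Serrin data is essential, and every comparison has to be carried out on the bounded rectangles $R_k$ --- where the maximum principle is unconditional --- and only afterwards passed to the limit. Handling the genuinely non-convex case of $f$, for which the cylinder $z=f(x)$ is no longer a subsolution, is the other point that requires care.
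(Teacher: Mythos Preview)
The paper does not give its own proof here: it simply cites Collin \cite{co} (his Remark~2), where $v^0$ is constructed as the \emph{maximal solution} of the minimal surface equation on the strip by exactly the procedure you outline --- Jenkins--Serrin solutions $u_k$ on exhausting rectangles with $+\infty$ data on the short sides, monotone decreasing in $k$, passed to the limit with interior estimates. So your overall strategy coincides with the intended one, and the care you take to run every comparison on the bounded $R_k$ before passing to the limit is exactly right.

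The genuine gap is your handling of the strict inequality $f<v^0$ outside the strictly convex case. Replacing $f$ by a convex minorant $g\le f$ yields only $g<v^0$, which points the wrong way and does not give $f<v^0$. You already flag the affine case as ``degenerate,'' but it is worse than that: when $f$ is affine the plane $z=f(x)$ is itself a minimal graph with the correct boundary values, and one can check that the monotone limit $v^0$ coincides with it, so the strict inequality $f<v^0$ fails outright. Thus the proposition as stated cannot hold for arbitrary continuous $f$; it is over-general. (In the paper it is only ever invoked with convex $f$, and the subsequent argument in fact only needs $w_\theta\le v^0$, which follows by comparing $w_\theta$ with each $u_k$ on the bounded rectangle $R_k$ and letting $k\to\infty$; the strict inequality $f<v^0$ is never used.) Your lower-bound step --- an affine function below $f$ on all of $\r$ --- also tacitly uses convexity; for a merely continuous $f$ that decays super-linearly no such global affine minorant exists, and you would have to argue differently to keep the $u_k$ from drifting to $-\infty$ on compact sets.
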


 This result was proved in \cite{co} following ideas of  \cite{js}: see Remark 2 in \cite{co} where $v^0$ is called the maximal solution of (\ref{em1}). 
   
 We begin with the  classical Perron
method of sub and supersolutions for the Dirichlet problem (\ref{eq1})-(\ref{eq2}): see \cite[pp. 306-312]{ch}, \cite[Sec. 6.3]{gt}).   
 Let $u\in C^0(\overline{\Omega})$ be a continuous function and let $D$ be a closed round disk in $\Omega$. We denote by $\bar{u}\in C^2(D)$ the unique solution of the Dirichlet problem
 $$
 \left\{\begin{array}{ll}
 Q[\bar{u}]=0 & \mbox{ in $D$}\\
 \bar{u}=u & \mbox{ on $\partial D$}
 \end{array}\right.$$
whose existence is assured by Theorem \ref{t1} and its uniqueness by Proposition \ref{t31}. We extend $\bar{u}$ by continuity to $\Omega$ by defining   $M_D[u]$ in $\Omega$ as 
 $$M_D[u]=\left\{\begin{array}{ll} 
 \bar{u},& \mbox{ in } D\\
 u,&\mbox{ in }\Omega\setminus D.
 \end{array}\right.$$
 The function $u$ is said to be a {\it supersolution} in $\Omega$ is satisfies   $M_D[u]\leq u$ for every closed round disk $D$ in $\Omega$.  
 
{\it Example.}  For any domain $\Omega\subset\r^2$, the function $u=0$ in $\overline{\Omega}$ is a supersolution in $\Omega$. This is because if $D\subset \Omega$ is a closed round disk, then $\bar{u}<0$ since $Q[0]<Q[\bar{u}]=0$ and the maximum principle applies. Thus $M_D[u]\leq 0$. 
 
 Moreover, for each $p\in\Omega$, there exists a supersolution $u$ with $u(p)<0$. Indeed, let $D\subset\Omega$ be a closed round disk centered at $p$, which suppose to be the origin of $\r^2$. Let $\mathbf{b}=\mathbf{b}(r)$ be that $\alpha$-bowl soliton with $\mathbf{b}_{|\partial D}=0$. Then the function $u$ defined as $u=\mathbf{b}$ in $D$ and $u=0$ in $\overline{\Omega}\setminus D$ is a supersolution.

 \begin{definition} Let $u\in C^0(\overline{\Omega})$. We say $u$ is a superfunction relative to $f$ if $u$  is a supersolution in $\Omega$ and $f\leq u$ on $\partial \Omega$. Denote
 by $\mathcal{S}_f$   the class of all superfunctions relative to $f$, that is, 
 $$\mathcal{S}_f=\{u\in C^0(\overline{\Omega}):M_D[u]\leq u \mbox{ for every closed round disk $D\subset\Omega$}, f\leq u \mbox{ on $\partial\Omega$}\}.$$ 
 \end{definition}

 \begin{lemma}   The set $\mathcal{S}_f$  is not empty.
 \end{lemma}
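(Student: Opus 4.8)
The plan is to exhibit a single explicit superfunction relative to $f$, which suffices to show $\mathcal{S}_f \neq \emptyset$. The natural candidate is built from the $\alpha$-grim reapers of Definition \ref{d-22}, together with the maximal solution $v^0$ of the minimal surface equation provided by Proposition \ref{p-min}. First I would recall that, by Proposition \ref{p-min}, the function $v^0 \in C^2(\Omega)\cap C^0(\overline{\Omega})$ solves the minimal surface equation with $v^0 = \varphi_f$ on $\partial\Omega_m$; since $Q[v^0] = \mathrm{div}(Dv^0/\sqrt{1+|Dv^0|^2}) - (1+|Dv^0|^2)^{-\alpha/2} = -(1+|Dv^0|^2)^{-\alpha/2} < 0$, the maximum principle (via the comparison argument already used in the Example preceding the lemma) shows that on any closed round disk $D\subset\Omega$ the $Q$-solution $\bar{v}^0$ with boundary values $v^0$ satisfies $\bar{v}^0 \le v^0$, i.e. $M_D[v^0]\le v^0$. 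Hence $v^0$ is itself a supersolution in $\Omega$, and since $v^0 = \varphi_f = f$ on $\partial\Omega_m$ we get $f \le v^0$ on the boundary; therefore $v^0 \in \mathcal{S}_f$ and the set is nonempty.

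If one prefers to avoid invoking Proposition \ref{p-min} directly (or wishes a more self-contained argument when $f$ is bounded below), the alternative is to use a large constant or an $\alpha$-grim reaper as a supersolution. Any constant function $u \equiv c$ satisfies $Q[c] = -1 < 0$, so by the same maximum-principle comparison $M_D[c] \le c$ for every disk $D$, making $c$ a supersolution; choosing $c \ge \max_{\partial\Omega_m}\varphi_f$ — which requires $f$ bounded above, as it is when $f$ is a convex function attaining a minimum and growing at most linearly on the relevant range, or more generally after noting we only need to dominate $f$ on $\partial\Omega_m$ — gives $f\le c$ on $\partial\Omega_m$ and hence $c\in\mathcal{S}_f$. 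For general convex $f$ that is unbounded, the honest choice is the $\alpha$-grim reaper $w_\theta$ of Definition \ref{d-22}: since $w_\theta$ is an $\alpha$-translating soliton it satisfies $Q[w_\theta]=0$, so $M_D[w_\theta]=w_\theta$ trivially on disks inside its domain, and one arranges $w_\theta \ge \varphi_f$ on $\partial\Omega_m$ using its quadratic-or-faster growth in $y$ combined with the linear term $(\tan\theta)x$ to dominate the at-most-linear-in-$x$ growth of a convex function on a strip — but this growth comparison is precisely the delicate point and it is really the content of the later barrier construction, not of this lemma.

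The cleanest and intended route, therefore, is the first one: cite Proposition \ref{p-min}, observe $Q[v^0] < 0$, apply the maximum principle exactly as in the displayed Example, and conclude $v^0 \in \mathcal{S}_f$. The main (and only) obstacle is making sure the maximum-principle step is legitimate — that $\bar{v}^0 \le v^0$ on every closed round disk $D$ — but this is immediate from the comparison principle \cite[Th.~10.1]{gt} since $\bar{v}^0$ and $v^0$ agree on $\partial D$ while $Q[\bar{v}^0] = 0 > Q[v^0]$ in $D$. No growth estimates, no construction of barriers, and no use of the convexity of $f$ are needed here; those enter only later, when one must produce a finite supremum of the Perron family and verify the boundary values are attained. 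So the proof is short: I would spend one or two sentences on the comparison step and one sentence on the conclusion $f \le v^0$ on $\partial\Omega_m$.
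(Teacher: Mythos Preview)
Your proposal is correct and takes exactly the same approach as the paper: show that the maximal minimal solution $v^0$ from Proposition~\ref{p-min} lies in $\mathcal{S}_f$ by noting $Q[v^0]<0$, applying the comparison principle on each disk to get $M_D[v^0]\le v^0$, and observing $v^0=f$ on $\partial\Omega_m$. Your side discussion of alternatives is unnecessary (and the suggestion that an $\alpha$-grim reaper could dominate $\varphi_f$ on $\partial\Omega_m$ is in the wrong direction---in the paper they are used as \emph{lower} barriers), but you correctly discard it in favor of the intended argument.
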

 
 \begin{proof} We prove that $v^0\in  \mathcal{S}_f$, where  $v^0$ is  the minimal solution  given in Proposition \ref{p-min}. Let $D\subset\Omega$ be a closed round disk. Since $v^0$ is a minimal surface, then $Q[v^0]<0$ and because $\overline{v^0}=v^0$ in $\partial D$, the maximum principle implies $M_D[v^0]=\overline{v^0}\leq v^0$ in $D$. On the other hand, $v^0=f$ on $\partial\Omega$, proving definitively that $v^0\in \mathcal{S}_f$. 
 \end{proof}

We now give some  properties about superfunctions  whose proofs are straight-forward: in the case of the constant mean curvature equation, we refer \cite{lo}; in the context of $\alpha$-translating solitons, see \cite[Lems. 4.2--4.4]{jj}.  

 \begin{lemma}\label{p-p}
  \begin{enumerate}
 \item If $\{u_1,\ldots,u_n\}\subset  \mathcal{S}_f$, then $\min\{u_1,\ldots,u_n\}\in \mathcal{S}_f$.
 \item The operator $M_D$ is increasing in $\mathcal{S}_f$.
 \item If $u\in \mathcal{S}_f$ and  $D$ is a closed round disk in $\Omega$, then $M_D[u]\in\mathcal{S}_f$.
 \end{enumerate}
 \end{lemma}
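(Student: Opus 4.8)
The statement to prove is Lemma \ref{p-p}, which collects three elementary properties of the class $\mathcal{S}_f$ of superfunctions: (1) the minimum of finitely many superfunctions is a superfunction; (2) $M_D$ is monotone on $\mathcal{S}_f$; and (3) $M_D$ maps $\mathcal{S}_f$ into itself.

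The plan is to verify the three items directly from the definitions, relying on the uniqueness and the comparison (touching) principle already established for $Q$.

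\textbf{Item (1).} It suffices to treat $n=2$; the general case follows by induction. Let $u_1,u_2\in\mathcal{S}_f$ and set $w=\min\{u_1,u_2\}$. Since $u_1,u_2\in C^0(\overline{\Omega})$, so is $w$. The boundary inequality $f\le w$ on $\partial\Omega$ is clear from $f\le u_i$. For the supersolution property, fix a closed round disk $D\subset\Omega$. By monotonicity of the Dirichlet problem (which I'll prove in item (2)), and using $w\le u_i$ on $\partial D$, we get $M_D[w]\le M_D[u_i]\le u_i$ in $D$ for $i=1,2$; hence $M_D[w]\le\min\{u_1,u_2\}=w$ in $D$, while on $\Omega\setminus D$ we have $M_D[w]=w$ trivially. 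Therefore $w\in\mathcal{S}_f$.

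\textbf{Item (2).} Let $u,v\in\mathcal{S}_f$ with $u\le v$ in $\overline{\Omega}$, and let $D\subset\Omega$ be a closed round disk. Outside $D$ we have $M_D[u]=u\le v=M_D[v]$. Inside $D$, $\bar u$ and $\bar v$ are the solutions of $Q=0$ with boundary values $u\le v$ on $\partial D$. The comparison principle (Touching principle, equivalently \cite[Th.~10.1]{gt}) gives $\bar u\le\bar v$ in $D$: indeed if $\bar u(p)>\bar v(p)$ at some interior point, then $\bar u-\bar v$ attains a positive interior maximum, and since both satisfy the same equation $Q=0$ one reaches a contradiction with the boundary inequality via the touching principle applied to the graphs of $\bar u$ and $\bar v$ suitably translated; alternatively this is exactly the statement that the right-hand side of (\ref{eq1}) is non-decreasing in $u$. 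Hence $M_D[u]\le M_D[v]$ everywhere.

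\textbf{Item (3).} Let $u\in\mathcal{S}_f$ and $D\subset\Omega$ a closed round disk; put $\tilde u=M_D[u]$. Then $\tilde u\in C^0(\overline{\Omega})$, and $\tilde u=u=\varphi_f\ge f$ on $\partial\Omega$ since $D$ is compactly contained in $\Omega$. It remains to check $M_{D'}[\tilde u]\le\tilde u$ for every closed round disk $D'\subset\Omega$. First note $\tilde u\le u$ in $\overline{\Omega}$, by the supersolution property of $u$. Applying item (2) to $\tilde u\le u$, we get $M_{D'}[\tilde u]\le M_{D'}[u]\le u$ in $D'$. This is not yet what we want; we need the bound by $\tilde u$, not $u$. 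The key observation is that $\tilde u$ is itself a supersolution in the classical sense on $D$: on $D$, $Q[\tilde u]=0$, so for any disk $D'$ the solution with boundary data $\tilde u|_{\partial D'}$ agrees with $\tilde u$ on $D'\cap D$ wherever $D'\subset D$, by uniqueness. The main obstacle is the general case where $D'$ meets both $D$ and $\Omega\setminus D$; here the standard argument is: $\tilde u$ is continuous and satisfies $Q[\tilde u]\ge 0$ on $D$ (as a solution) and $Q[\tilde u]\ge 0$ on $\Omega\setminus D$ in the supersolution sense inherited from $u$, and across $\partial D$ the function $\tilde u=\min$-type kink opens the ``wrong'' way for a supersolution because $\tilde u\le u$ with equality on $\partial D$, so $\tilde u$ has a concave corner along $\partial D$, which is consistent with being a supersolution. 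Thus $M_{D'}[\tilde u]\le \tilde u$ follows by comparing on $\partial D'$: on $\partial D'$ one has $M_{D'}[\tilde u]=\tilde u$; inside, both in $D'\cap D$ and $D'\setminus D$ we compare the solution $\overline{\tilde u}$ on $D'$ with $\tilde u$ using that $\tilde u$ is a supersolution there and the comparison principle. Hence $M_{D'}[\tilde u]\le\tilde u$ in $D'$, while outside $D'$ equality holds; therefore $M_D[u]\in\mathcal{S}_f$.

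I expect item (3) to be the delicate point, specifically handling disks $D'$ that straddle the boundary $\partial D$: one must argue that replacing $u$ by its harmonic-type (minimal-graph) extension $\bar u$ on $D$ preserves the supersolution property globally, which hinges on the fact that $\tilde u\le u$ with equality on $\partial D$ produces a corner of $\tilde u$ along $\partial D$ of the correct (concave) sign. The first two items are routine consequences of continuity, the boundary conditions, and the comparison principle from \cite[Th.~10.1]{gt}.
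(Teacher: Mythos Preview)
The paper does not actually prove this lemma; it declares the proofs ``straight-forward'' and refers to \cite{lo} and \cite[Lems.~4.2--4.4]{jj}. Your approach is the standard Perron argument and is essentially correct; items (1) and (2) are fine (note that the monotonicity you prove in (2) really holds for all continuous boundary data, which is what you implicitly use in (1) before $w$ is known to lie in $\mathcal{S}_f$).

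In item (3) there is a sign slip and the exposition should be tightened. Supersolutions correspond to $Q[\cdot]\le 0$ in the classical sense, not $\ge 0$ (recall $Q[0]=-1<0$ and $0$ is a supersolution), so your parenthetical ``$Q[\tilde u]\ge 0$'' is backwards. More importantly, the ``concave corner'' heuristic is suggestive but not a proof; the clean argument is the one you almost have. You already established $\overline{\tilde u}=M_{D'}[\tilde u]\le M_{D'}[u]\le u$ on all of $D'$ (from $\tilde u\le u$ on $\partial D'$ and item (2)). This immediately gives $\overline{\tilde u}\le u=\tilde u$ on $D'\setminus D$. On $D'\cap D$ both $\overline{\tilde u}$ and $\tilde u=\bar u$ are solutions of $Q=0$; on the boundary $\partial(D'\cap D)$ you have equality along $\partial D'\cap \overline{D}$, and along $\partial D\cap D'$ you have $\tilde u=u\ge\overline{\tilde u}$ by the step just proved. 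The comparison principle then yields $\overline{\tilde u}\le\tilde u$ on $D'\cap D$, and you are done. Organizing the proof this way removes the need for the informal corner discussion.
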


  We begin with the proof of Theorem \ref{t2}.  We take $d(\alpha)$ the number given in Propositon \ref{p-cl} and let $f$ be a convex function on $\r$.

 Consider the family of $\alpha$-grim reaper $w_\theta$  (Definition \ref{d-22}) and recall that $w_\theta$ is defined in $\r^2$ if $\alpha>1$ or in the strip  $\Omega_{d,\theta}$. In particular, by the definition of $\Omega_d$ in (\ref{eq-do}), we find $\Omega_m\subset\Omega_d\subset \Omega_\theta$ for any $\theta$. Thus it makes sense to restrict $w_\theta$ to the strip $\Omega_m$ and  we keep the same notation for its restriction in $\Omega_m$.  Consequently $w_\theta$ is a linear function on $\partial\Omega_m$ and the boundary of $\Sigma_{w_\theta}$ consists of two parallel lines.

Consider the subfamily of     $\alpha$-grim reapers 
$$\mathcal{G}=\{w_\theta: w_\theta\leq f\mbox{ on $\partial\Omega_m$},\theta\in(-\pi/2,\pi/2)\}.$$ 
Two observations are needed to state:
  \begin{enumerate}
  \item The set $\mathcal{G}$ is not empty because $f$ is convex.
 \item Let $v^0$ be the minimal surface given in Proposition \ref{p-min} with $v^0=f$ on $\partial\Omega_m$. Then   $Q[v^0]<0$. Since $Q[w_\theta]=0$ for all $w_\theta\in\mathcal{G}$, the comparison principle asserts that $w_\theta<v^0$ in $\Omega_m$ for all $\theta\in (-\pi/2,\pi/2)$. This implies that $v^0$ will play the role of a subsolution for (\ref{eq1})-(\ref{eq2}). 
 
 \end{enumerate}
 
We are going to construct a solution of Equation (\ref{eq1})  between the $\alpha$-grim reapers of $\mathcal{G}$ and the minimal surface $v^0$. Let 
 $$\mathcal{S}_f^*=\{u\in \mathcal{S}_f: w_\theta\leq u\leq v^0,\, \forall w_\theta\in\mathcal{G}\}.$$
We point out that   $\mathcal{S}_f^*$ is not empty because $v^0\in  \mathcal{S}_f^*$.

\begin{lemma}\label{l-s}
 The set $\mathcal{S}_f^*$ is stable for the operator $M_D$, that is, if $u\in \mathcal{S}_f^*$, then $M_D[u]\in \mathcal{S}_f^*$.
\end{lemma}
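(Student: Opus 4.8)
The plan is to verify the three defining conditions of $\mathcal{S}_f^*$ for $M_D[u]$, namely that $M_D[u]\in\mathcal{S}_f$, that $w_\theta\leq M_D[u]$ for all $w_\theta\in\mathcal{G}$, and that $M_D[u]\leq v^0$. The first condition is already granted by part (3) of Lemma \ref{p-p}, so only the two sandwich inequalities remain. Both will be consequences of the comparison principle applied on the closed round disk $D\subset\Omega_m$, using that $u$ itself lies between $w_\theta$ and $v^0$ and that $M_D$ only changes $u$ inside $D$.

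First I would treat the upper bound $M_D[u]\leq v^0$. Outside $D$ we have $M_D[u]=u\leq v^0$ by hypothesis. Inside $D$, the function $\overline{u}:=M_D[u]|_D$ solves $Q[\overline{u}]=0$, whereas $Q[v^0]<0$ because $v^0$ is a minimal graph (so its right-hand side $(1+|Dv^0|^2)^{-\alpha/2}$ is strictly positive while $\mathrm{div}(Dv^0/\sqrt{1+|Dv^0|^2})=0$). On $\partial D$ we have $\overline{u}=u\leq v^0$. Hence $v^0$ is a supersolution and $\overline{u}$ a solution of $Q=0$ agreeing from below on $\partial D$, so the comparison principle (\cite[Th.~10.1]{gt}) gives $\overline{u}\leq v^0$ throughout $D$. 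Combining the two regions yields $M_D[u]\leq v^0$ on all of $\Omega_m$.

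For the lower bound $w_\theta\leq M_D[u]$, fix $w_\theta\in\mathcal{G}$. Outside $D$ we have $M_D[u]=u\geq w_\theta$. Inside $D$, $\overline{u}$ solves $Q[\overline{u}]=0$ while $Q[w_\theta]=0$ as well since $w_\theta$ is an $\alpha$-translating soliton; on $\partial D$ we have $\overline{u}=u\geq w_\theta$. The comparison principle for the operator $Q$ — whose right-hand side is non-decreasing in $u$ — applied with these boundary data gives $w_\theta\leq\overline{u}$ on $D$. Together with the inequality outside $D$ this shows $w_\theta\leq M_D[u]$ on $\Omega_m$ for every $w_\theta\in\mathcal{G}$, and the proof is complete.

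The only delicate point is making sure the comparison principle is being invoked legitimately on $D$: one needs that $\overline{u}$ is genuinely a $C^2$ solution on the closed disk with continuous boundary values matching $u$, which is exactly the content of Theorem \ref{t1} and the uniqueness statement in Proposition \ref{t31} used to define $M_D$; and one needs the two competitors ($v^0$ and $w_\theta$) to be defined and of the required regularity on all of $\overline{D}$, which holds because $\overline{D}\subset\Omega_m\subset\Omega_d\subset\Omega_{d,\theta}$, the common domain on which these functions are smooth. No genuine obstacle arises beyond this bookkeeping.
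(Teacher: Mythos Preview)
Your proof is correct and follows essentially the same approach as the paper. The only cosmetic difference is that the paper packages the two comparison-principle arguments via the monotonicity of $M_D$ from Lemma~\ref{p-p}: from $w_\theta\leq u\leq v^0$ it gets $M_D[w_\theta]\leq M_D[u]\leq M_D[v^0]$, then observes $M_D[w_\theta]=w_\theta$ (as $w_\theta$ is itself a solution) and $M_D[v^0]\leq v^0$ (as $v^0$ is a supersolution), whereas you unfold these steps and invoke the comparison principle directly on $D$.
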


\begin{proof}
Let $u\in \mathcal{S}_f^*$. We know by Lemma \ref{p-p} that $M_D[u]\in \mathcal{S}_f$. On the other hand, since $w_\theta\leq u\leq  v^0$ and because $M_D$ is increasing (Lemma \ref{p-p} again), then for every closed round disk $D$ in $\Omega_m$ it follows that $M_D[w_\theta]\leq M_D[u]\leq M_D[v^0]$. Finally $M_D[w_\theta]=w_\theta$, so $w_\theta\leq M_D[u]$, and since $v^0$ is a supersolution, we conclude $M_D[v^0]\leq v^0$, hence $M_D[u]\leq v^0$. This proves the result. 
\end{proof}

We are going to complete the proof of Theorem \ref{t2}.

\begin{proposition}[Perron process] \label{p-pe}
The function $u:\Omega_m\rightarrow\r$ given by 
$$v(x,y)=\inf\{u(x,y):u\in \mathcal{S}_f^*\}$$
 is a solution of (\ref{eq1}) with $v=\varphi_f$ on $\partial\Omega_m$.\end{proposition}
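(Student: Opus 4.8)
The plan is to run the standard Perron argument adapted to this setting, exploiting that $v$ is now squeezed between the barriers $\mathcal{G}$ below and $v^0$ above. First I would establish that $v$ is well defined and that it is itself a supersolution: by construction $v=\inf\{u:u\in\mathcal{S}_f^*\}$ with each $u$ bounded below by any fixed $w_\theta\in\mathcal{G}$, so the infimum is finite everywhere; and since each $u\in\mathcal{S}_f^*$ satisfies $M_D[u]\le u$, taking infima and using that $M_D$ is increasing on $\mathcal{S}_f$ (Lemma \ref{p-p}) gives $M_D[v]\le v$, so $v\in\mathcal{S}_f$, and the sandwiching $w_\theta\le v\le v^0$ persists in the limit, so in fact $v\in\mathcal{S}_f^*$.

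Next I would prove $v$ is a solution of $Q[v]=0$ in $\Omega_m$ by the usual replacement-and-diagonal argument. Fix a point $p\in\Omega_m$ and a closed round disk $D\subset\Omega_m$ centered at $p$. Choose a sequence $u_n\in\mathcal{S}_f^*$ with $u_n(p)\to v(p)$; replacing $u_n$ by $\min\{u_1,\ldots,u_n,v^0\}$ (still in $\mathcal{S}_f^*$ by Lemma \ref{p-p} and Lemma \ref{l-s}) we may assume the $u_n$ are monotone decreasing. Set $U_n=M_D[u_n]\in\mathcal{S}_f^*$ by Lemma \ref{l-s}; then $v\le U_n\le u_n$ and $U_n(p)\to v(p)$. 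On $D$ the functions $U_n$ solve $Q[U_n]=0$ and are uniformly bounded (between $w_\theta$ and $v^0$), so by interior gradient estimates for the minimal surface / translating soliton equation (Proposition \ref{t31}(3), together with the standard elliptic bootstrapping à la Serrin for \eqref{ese}) one extracts a subsequence converging in $C^2_{\mathrm{loc}}$ on the interior of $D$ to a solution $\tilde v$ of $Q[\tilde v]=0$ with $v\le\tilde v$ on $D$ and $\tilde v(p)=v(p)$. A second diagonal argument over a countable dense set of interior points $q$ of $D$, comparing with minimizing sequences at each $q$, forces $\tilde v\equiv v$ on the interior of $D$; hence $v$ satisfies \eqref{eq1} in a neighborhood of $p$, and $p$ being arbitrary, $v\in C^2(\Omega_m)$ solves \eqref{eq1}.

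Finally I would verify the boundary condition $v=\varphi_f$ on $\partial\Omega_m$, which is where the barriers $\mathcal{G}$ and the minimal graph $v^0$ do the real work. The upper bound is immediate: $v\le v^0$ and $v^0=\varphi_f=f$ on $\partial\Omega_m$, so $\limsup_{(x,y)\to(x_0,\pm m)} v\le f(x_0)$. For the lower bound, fix a boundary point $(x_0,\pm m)$; since $f$ is convex there is an affine support line $\ell(x)=f(x_0)+c(x-x_0)\le f(x)$, and one checks this corresponds to an $\alpha$-grim reaper $w_\theta$ with the appropriate slope $\tan\theta=c$ (adjusting the additive constant $a$ so that $w_\theta\le\varphi_f$ on $\partial\Omega_m$ with equality forced at $(x_0,\pm m)$ — here one uses that $w_\theta$ restricted to $\partial\Omega_m$ is affine in $x$ and that, by Proposition \ref{p-cl}, $w_\theta$ is finite on $\overline{\Omega_m}$ in all the cases of the theorem, i.e. $m<d(\alpha)$ when $0<\alpha\le1$ and any $m$ when $\alpha>1$). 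Then $w_\theta\in\mathcal{G}$, so $w_\theta\le v$, giving $\liminf_{(x,y)\to(x_0,\pm m)} v(x,y)\ge w_\theta(x_0,\pm m)=f(x_0)$. Combining, $v$ extends continuously to $\overline{\Omega_m}$ with $v=\varphi_f$ on the boundary.

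The main obstacle is the interior regularity/compactness step: one must know that a locally uniformly bounded sequence of solutions of $Q[U_n]=0$ on a disk has $C^2_{\mathrm{loc}}$-convergent subsequences. This rests on interior gradient estimates for \eqref{eq1}; Proposition \ref{t31}(3) only gives that the gradient maximum is on the boundary, which is not directly an interior bound for the $U_n$ on a fixed smaller disk, so one genuinely needs Serrin-type a priori gradient estimates (as in \cite{se,gt}) plumbed through, after which Schauder theory gives the higher-order convergence. The boundary attainment is conceptually the second delicate point, but it is precisely the reason the hypotheses single out convex $f$ and the width restriction $m<d(\alpha)$: convexity guarantees enough grim-reaper barriers from below and $m<d(\alpha)$ guarantees those barriers are actually defined (and finite) on $\overline{\Omega_m}$.
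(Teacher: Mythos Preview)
Your outline follows the same Perron scheme as the paper, and the boundary step (convex $f$ gives an affine support line, hence a tilted $\alpha$-grim reaper $w_\theta^p\in\mathcal{G}$ touching $\varphi_f$ at $(x_0,\pm m)$, so $w_\theta^p\le v\le v^0$ pinches $v$ to $f(x_0)$) is exactly what the paper does.

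Two points deserve sharpening. First, your opening claim that $v\in\mathcal{S}_f^*$ is premature: the pointwise infimum of continuous functions is only upper semicontinuous, so $M_D[v]$ is not yet defined and the inequality $M_D[v]\le v$ has no meaning at this stage. The paper never asserts $v\in\mathcal{S}_f^*$; it goes straight to showing $v$ is a classical solution.

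Second, and more substantively, your ``second diagonal argument over a countable dense set'' to force $\tilde v\equiv v$ on $\operatorname{int}(D)$ is not the mechanism the paper uses, and as written it has a gap: agreement of $\tilde v$ and $v$ on a dense set only gives $\tilde v=v$ once you know $v$ is continuous, which is precisely what is being proved. The paper (following \cite{gt}) instead fixes a \emph{single} second point $q\in\operatorname{int}(D)$, builds a second comparison sequence $\tilde V_n=M_D[\min\{V_n,\tilde u_n\}]$ converging to a solution $\tilde V$ with $\tilde V\le V$, $\tilde V(q)=v(q)$, and $\tilde V(p)=V(p)$; then the \emph{touching principle} (strong maximum principle) forces $\tilde V=V$ on $\operatorname{int}(D)$, hence $V(q)=v(q)$. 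The maximum principle is the missing ingredient in your sketch.

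For the compactness step you correctly flag that Proposition~\ref{t31}(3) is not an interior estimate; the paper invokes the interior $C^{1,\beta}$ estimates of Simon \cite{si} and, for this specific equation, \cite{gjj}.
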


\begin{proof} The proof consists of two parts.

 Claim 1. {\it The function $v$ is a solution of Equation (\ref{eq1})}.

The proof is standard and here we follow \cite{gt}. Let $p\in\Omega_m$ be an arbitrary fixed point of $\Omega_m$.  Consider a sequence $\{u_n\}\subset \mathcal{S}_f^*$ such that $u_n(p)\rightarrow v(p)$ when $n\rightarrow\infty$. Let $D$ be  a closed round disk centered at $p$ and contained in $\Omega_m$. For each $n$, define the function 
$$v_n(q)=\min\{u_1(q),\ldots,u_n(q)\},\quad q\in\overline{\Omega_m}.$$
Then $v_n\in \mathcal{S}_f^*$ by Lemma \ref{p-p}. By the definition of $M_D$, we deduce  $M_D[v_n](p)\rightarrow v(p)$ as $n\rightarrow\infty$ (Lemma \ref{l-s}). Set $V_n=M_D[v_n]$. Then $\{V_n\}$ is a decreasing sequence bounded from below by $w_\theta$ for all $w_\theta\in\mathcal{G}$ and satisfying (\ref{eq1}) in the disk $D$. Consequently the functions $V_n$ are uniformly bounded on compact sets $K$ of $D$. In each compact set $K$, the norms of the gradients $|DV_n|$ are bounded by a constant depending only on $K$ and using H\"{o}lder estimates of Ladyzhenskaya and Ural'tseva, there exist uniform $C^{1,\beta}$ estimates for the sequence $\{V_n\}$ on $K$: see \cite{si} for interior  estimates for the mean curvature type equations in two variables  and   in the context of Equation (\ref{eq1}), the estimates were  proved in \cite{gjj}. By compactness, there exists a subsequence of $V_n$, that we denote $V_n$ again, such that $\{V_n\}$ converges on $K$ to a $C^2$ function $V$ in the $C^2$ topology and by continuity, $V$ satisfies (\ref{eq1}). Moreover, by construction, at the fixed point $p$ we have $V(p)=v(p)$.

It remains to prove that $V=v$ in $int(D)$, not only at the fixed point $p$. For $q\in int(D)$, we do a similar argument for $q$, so let $\{\tilde{u}_n\}\subset\mathcal{S}_f^*$, $\tilde{u}_n(q)\rightarrow v(q)$. Let $\tilde{v}_n=\min\{V_n,\tilde{u}_n\}$ and $\tilde{V}_n=M_D[\tilde{v}_n]$. Again $\tilde{V}_n$ converges on $D$ in the $C^2$ topology to a $C^2$ function $\tilde{V}$ satisfying (\ref{eq1}) and $\tilde{V}(q)=v(q)$. By construction, $\tilde{V}_n\leq \tilde{v}_n\leq V_n$, hence $\tilde{V}\leq V$.  Since $v\leq \tilde{V}$, we infer $\tilde{V}(p)=v(p)=V(p)$. Thus $V$ and $\tilde{V}$ coincide at an interior point of $D$, namely, the point $p$, and both functions $V$ and $\tilde{V}$ satisfy the $\alpha$-translating soliton equation (\ref{eq1}). Because $\tilde{V}\leq V$,  the touching principle implies  $V=\tilde{V}$ in $int(D)$. In particular, $V(q)=\tilde{V}(q)=v(q)$. This shows that $V=v$ in $int(D)$ and the claim is proved.

Claim 2. {\it The function $v$ is continuous up to $\partial\Omega_m$ with $v=\varphi_f$ on $\partial\Omega_m$}.

In order to finish the proof of Theorem \ref{t2}, we prove that the function $v$ takes   the value $f$ on $\partial\Omega_m$ and consequently $v$ is continuous up to $\partial\Omega_m$ proving that $v\in C^2(\Omega_m)\cap C^0(\overline{\Omega_m})$. Let us observe that the graph  of $\varphi_f$ consists of two copies of the graph of $f$, namely, 
$$\Gamma_{\varphi_f}=\Gamma_1\cup\Gamma_2=\{(x,m,f(x)):x\in\r\}\cup \{(x,-m,f(x)):x\in\r\}.$$
Let $p=(x_0,m)\in\partial\Omega_m$ be a boundary point of $\Omega_m$ (similar argument if $p=(x_0,-m)$). Because of the convexity of $f$, in the plane of equation $y=m$ the tangent line $L_p$ to the  planar curve $\Gamma_1$  leaves $\Gamma_1$ above $L_p$. Taking into account the  symmetry of $\varphi_f$ and the convexity of $f$, there exists an $\alpha$-grim reaper $w_\theta^p$ such that $w_\theta^p(p)=f(x_0)$ and $w_\theta^p<f$ in $\Gamma_{\varphi_f}\setminus\{(x_0,m,f(x_0),(x_0,-m,f(x_0))\}$, or in other words, $\partial\Sigma_{w_\theta^p}$ lies strictly below $\partial\Sigma_v$ except   at the points $(x_0,m,f(x_0)$ and $(x_0,-m,f(x_0))$, where $\Sigma_{w_\theta^p}$ and $\Sigma_v$ coincide.

Therefore the function $w_\theta^p$ and the minimal surface $v^0$ form a modulus of continuity in a neighbourhood of $p$, namely, $w_\theta^p\leq v\leq v^0$. Because  $w_\theta^p(p)=v^0(p)=f(p)$, we infer that  $v(p)=f(p)$ and this completes the proof of Theorem \ref{t2}.
 \end{proof}

\section*{Acknowledgements}
The author has been partially
supported by MEC-FEDER
 grant no. MTM2017-89677-P

\end{document}